\newcommand{\Z}{\mathbb{Z}}
\title{
Extremal problems on the hypercube and 
\\ 
the codegree Tur\'{a}n density of complete {\Large{\MakeLowercase{$r$}}}-graphs
}
\author{Alexander Sidorenko\thanks{\email{sidorenko.ny@gmail.com.com}}}
\begin{document}

\maketitle

\begin{abstract}
Let $G$ be a finite abelian group, 
and $r$ be a multiple of its exponent. 
The generalized Erd\H{o}s--Ginzburg--Ziv constant $s_r(G)$
is the smallest integer $s$
such that every sequence of length $s$ over $G$ 
has a zero-sum subsequence of length $r$. 
We show that 
$s_{2m}(\mathbb{Z}_2^d) \leq C_m 2^{d/m} + O(1)$ 
when $d\rightarrow\infty$, 
and $s_{2m}(\mathbb{Z}_2^d) \geq 2^{d/m} + 2m-1$ when $d=km$. 
We use results on $s_r(G)$ 
to prove new bounds for the codegree Tur\'{a}n density 
of complete $r$-graphs.
\end{abstract}

\begin{keywords}
Tur\'{a}n density, codegree, Sidon set, zero-sum subsequence, 
Erd\H{o}s--Ginzburg--Ziv constant
\end{keywords}

\begin{AMS}
05C35, 20K01
\end{AMS}

\section{Introduction}
In this paper, we consider three problems: 
the Sidon problem for $\Z_2^d$ (\cref{sec:Sidon}), 
the generalized Erd\H{o}s--Ginzburg--Ziv problem 
(\cref{sec:Zero-sum}), 
and the codegree Tur\'{a}n problem for complete $r$-graphs 
(\cref{sec:Turan,sec:Turan2}). 
\Cref{sec:Sidon,sec:Zero-sum} can be read 
independently from the rest of the article. 
In the proof of \cref{th:upper_general}, 
we use the notion of $r$-graphs. 
The necessary definitions and notation are given below.

An $r$-{\it graph} is a pair $H=(V(H),E(H))$ 
where $V(H)$ is a finite set of vertices, 
and the edge set $E(H)$ is a collection of $r$-subsets of $V(H)$. 
We denote ${\mathrm v}(H)=|V(H)|$ and ${\mathrm e}(H)=|E(H)|$.
The {\it independence number} $\alpha(H)$ 
is the maximum size of a subset of $V(H)$ 
which contains no edges of $H$. 
The {\em degree} of a subset $A \subseteq V(H)$ 
is the number of edges of $H$ which contain $A$.
For $0 \leq l \leq r$, let $\Delta_l(H)$ denote
the maximum degree among of $l$-subsets of $V(H)$. 
Notice that $\Delta_0(H) = {\mathrm e}(H)$ and 
\begin{equation}\label{monotonicity}
  \frac{\Delta_0(H)}{\binom{n}{r}} \;\; \leq \;\; 
  \frac{\Delta_1(H)}{\binom{n-1}{r-1}} \;\; \leq \;\; 
    \cdots \;\; \leq \;\;
  \frac{\Delta_{r-1}(H)}{\binom{n-(r-1)}{1}} \; .
\end{equation}

\section{Codegree Tur\'{a}n density}\label{sec:Turan}

The classical Tur\'{a}n number, $T(n,k,r)$, 
is the minimum number of edges 
in an $n$-vertex $r$-graph $H$ with $\alpha(H) < k$. 
Correspondingly, $\binom{n}{r} - T(n,k,r)$ 
is the largest number of edges in an $n$-vertex $r$-graph 
that does not contain a complete subgraph on $k$ vertices.
There exists the limit 
$t(k,r) = \lim_{n\rightarrow\infty} T(n,k,r) / \binom{n}{r}$. 
The exact values of Tur\'{a}n numbers for $r=2$ were found 
by Mantel \cite{Mantel:1907} in the case $k=3$, 
and by Tur\'{a}n \cite{Turan:1941} for all $k$. 
In particular, $t(k,2) = 1/(k-1)$. 
For $k>r>2$, not a single value $t(k,r)$ is known.
For details, see surveys \cite{Keevash:2011,Sidorenko:1995}.

One of the ways to generalize the Tur\'{a}n numbers is 
\[
  T_l(n,k,r) \; = \; 
  \min \{ \Delta_l(H) : \; {\mathrm v}(H)=n, \; \alpha(H)<k \} \; .
\]
Notice that $T(n,k,r)=T_0(n,k,r)$. 
Lo and Markstr\"{o}m \cite{Lo:2014} proved the existence of the limit
$
  t_l(k,r) \; = \; 
  \lim_{n\rightarrow\infty} T_l(n,k,r) \left/ \binom{n-l}{r-l} \right. \; .
$
Inequalities~(\ref{monotonicity}) imply 
\[
  t_0(k,r) \; \leq \;
  t_1(k,r) \; \leq \;
    \ldots \; \leq \;
  t_{r-1}(k,r) \; .
\]

The case $l=1$ is known as a {\it Zarankiewicz type problem} 
(see \cite[Chapter~3]{Simonovits:1997}), 
and $t_1(k,r)=t_0(k,r)=t(k,r)$. 
The problem of determining $t_l(k,r)$ has been studied 
in \cite{Czygrinow:2001,Falgas-Ravry:2013,Lo:2014,Mubayi:2007}
in its complimentary form 
(see also Chapter~13.2 of survey \cite{Keevash:2011}). 
In notation of \cite{Lo:2014}, $t_l(k,r)=1-\pi_l\left( K_k^r\right)$.
The case $l=r-1$ was first introduced by Mubayi and Zhao \cite{Mubayi:2007} 
under the name of {\it codegree density}. 
Lo and Markstr\"{o}m \cite{Lo:2014} proved 
that for all $l=1,2,\ldots,r-1$, 
\begin{equation}\label{eq:Lo}
  t_l(k,r) \; \leq \; t_{l-1}(k-1,r-1) \; .
\end{equation}

To simplify notation for the codegree density, 
we define $\tau(k,r)=t_{r-1}(k,r)$.
The known upper bounds for $\tau(k,r)$ 
follow from \cref{eq:Lo} 
and upper bounds for the classical Tur\'{a}n density: 
$\tau(k,r) \leq 1/(k-r+1)$. 
Czygrinow and Nagle \cite{Czygrinow:2001} conjectured that $\tau(4,3)=1/2$. 
Lo and Markstr\"{o}m \cite{Lo:2014} extended this conjecture to 
$\tau(r+1,r)=1/2$. 

We will prove upper bounds on the codegree density 
which are significantly better than 
$\tau(k,r) \leq 1/(k-r+1)$. 

In \cref{sec:Sidon,sec:Zero-sum} of this article, 
we study Sidon sets and zero-sum-free sequences 
in group ${\mathbb Z}_2^d$.
The results of \cref{sec:Sidon,sec:Zero-sum}
are used in \cref{sec:Turan2} 
to obtain new upper bounds for $\tau(k,r)$ when $k-r$ is small.
In particular, for $r=3$, we prove
\begin{equation}\label{eq:r_3_small}
  \tau(2a_d + 1, 3) \; \leq \; 3^{-d} \; ,
\end{equation}
where $a_d$ is the maximum size of a cap in the affine geometry $AG(d,3)$ 
($a_2=4,\; a_3=9,\; a_4=20,\; a_5=45,\; a_6=112$). 
For $r \geq 4$, we prove
\begin{equation}\label{eq:small_r-k}
  \tau(r+2,r) \leq 1/4 ,  \;\;\;\;
  \tau(r+3,r) \leq 1/8 ,  \;\;\;\;
  \tau(r+5,r) \leq 1/16 ,  \;\;\;\;
\end{equation}
and in general,
\begin{equation}\label{eq:not_so_small_r-k}
  \tau(r+b_d,r) \; \leq \; 2^{-d} \; ,
\end{equation}
where 
$b_d = \lfloor \left(2^{d+1} - 7/4\right)^{1/2} - 1/2 \rfloor$.
Notice that $d=1$ in \cref{eq:not_so_small_r-k} 
gives $\tau(r+1,r) \leq 1/2$ 
which is in line with the conjecture of Lo and Markstr\"{o}m.

\section{Sidon problem for $\Z_2^d$}\label{sec:Sidon}

A {\it Sidon set} $A$ in an abelian group $G$ is a set with the property 
that all pairwise sums of its elements are different 
(see \cite{Babai:1985}). 
If $G$ is finite, let $\beta(G)$ denote the largest size of its Sidon set. 
Obviously, $\binom{\beta(G)}{2} \leq |G|$. 

We denote by $\Z_k^d$ 
the group of $d$-dimensional vectors over ${\mathbb Z}_k$. 

\begin{theorem}\label{th:SidonUpper}
\[
  \beta\left(\Z_2^d\right) \; \leq \; 
  \sqrt{2^{d+1} - \frac{7}{4}} + \frac{1}{2} \; .
\]
\end{theorem}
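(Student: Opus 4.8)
The plan is to sharpen the trivial counting bound $\binom{\beta(G)}{2} \le |G|$ quoted above by exploiting the special structure of $\Z_2^d$. Write $A$ for a Sidon set of maximum size $\beta = \beta(\Z_2^d)$, and consider the collection of sums $a+b$ ranging over the $\binom{\beta}{2}$ unordered pairs of distinct elements $a,b \in A$. By the defining property of a Sidon set, these sums are pairwise distinct.

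The key additional observation I would use is that in $\Z_2^d$ every element is its own inverse, so $a+b=0$ holds if and only if $a=b$. Hence for distinct $a,b$ the sum $a+b$ is a \emph{nonzero} element of $\Z_2^d$. The $\binom{\beta}{2}$ sums are therefore distinct and all avoid $0$, so they inject into $\Z_2^d \setminus \{0\}$, which yields
\[
  \binom{\beta}{2} \;\le\; \left|\Z_2^d\right| - 1 \;=\; 2^d - 1 .
\]
This single improvement over the naive estimate --- replacing $|G|$ by $|G|-1$ --- is exactly what separates the constant $-7/4$ in the statement from the weaker $+1/4$ that the naive bound would produce.

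It then remains to solve the resulting quadratic inequality for $\beta$. Rewriting $\binom{\beta}{2} \le 2^d - 1$ as $\beta^2 - \beta - (2^{d+1} - 2) \le 0$ and applying the quadratic formula gives $\beta \le \tfrac12\bigl(1 + \sqrt{2^{d+3} - 7}\bigr)$; pulling the factor $\tfrac12 = \sqrt{1/4}$ inside the square root turns this into the claimed $\beta \le \sqrt{2^{d+1} - 7/4} + 1/2$. I do not expect any genuine obstacle here: the whole content is the remark that $0$ is never a pairwise sum of distinct elements, and the rest is routine algebra. The only point worth pinning down is the precise definition of a Sidon set in use, namely that the sums over distinct pairs (equivalently, the nonzero differences) are all distinct --- since with elements of order $2$ the degenerate sums $a+a$ are all equal to $0$ and must be excluded for the notion to be nontrivial.
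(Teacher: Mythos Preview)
Your proposal is correct and matches the paper's proof essentially verbatim: the paper also observes that two unequal elements of $\Z_2^d$ cannot sum to zero, deduces $\binom{|A|}{2}\le 2^d-1$, and solves the resulting quadratic. Your exposition is more detailed, but the idea and the computation are identical.
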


\begin{proof}
Let $A$ be a Sidon set in $\Z_2^d$. 
Since two unequal elements can not have zero sum, 
$\binom{|A|}{2} \leq 2^d - 1$ 
which results in 
$|A| \leq (2^{d+1} - 7/4)^{1/2} + 1/2$. 
\end{proof}

\begin{theorem}[\cite{Lindstrom:1969}]\label{th:SidonLower}
For even values of $d$, 
\[
  \beta\left( \Z_2^d \right) \; \geq \; 2^{d/2} \; .
\]
\end{theorem}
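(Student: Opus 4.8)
The plan is to realize the bound by an explicit algebraic construction. Since $d$ is even, I write $q = 2^{d/2}$ and identify the additive group $\Z_2^d$ with $\mathbb{F}_q \times \mathbb{F}_q$, using that the finite field $\mathbb{F}_q$, viewed as a vector space over $\Z_2$, is isomorphic to $\Z_2^{d/2}$. I would then take
\[
  A \; = \; \{ (x, x^3) : x \in \mathbb{F}_q \} \; ,
\]
which has exactly $q = 2^{d/2}$ elements, and show that $A$ is a Sidon set.

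To verify the Sidon property, I suppose $(x,x^3) + (y,y^3) = (u,u^3) + (v,v^3)$ for pairs $\{x,y\}$ and $\{u,v\}$ of distinct field elements. Comparing first coordinates gives $x+y = u+v =: s$, and since the two elements of each pair differ we have $s \neq 0$. Writing $y = x+s$ and $v = u+s$, the second-coordinate equation becomes $g_s(x) = g_s(u)$, where $g_s(x) = x^3 + (x+s)^3$. Because $g_s(x+s) = g_s(x)$ automatically in characteristic $2$, the map $g_s$ is constant on each coset of $\{0,s\}$, so the whole Sidon condition reduces to showing that, for every $s \neq 0$, the function $g_s$ takes distinct values on distinct cosets of $\{0,s\}$.

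The key computation is
\[
  g_s(x) \; = \; x^3 + (x+s)^3 \; = \; s\,x^2 + s^2 x + s^3 \; ,
\]
valid in characteristic $2$. If $g_s(x) = g_s(u)$, then cancelling $s^3$ and dividing by $s \neq 0$ gives $x^2 + s x = u^2 + s u$, hence $(x+u)^2 + s(x+u) = 0$, which factors as $(x+u)(x+u+s) = 0$. Therefore $u \in \{x, x+s\}$, i.e.\ $\{u,v\} = \{x,y\}$, so the two representations coincide. This establishes that $A$ is Sidon of size $2^{d/2}$.

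I expect the main subtlety to lie in the choice of the defining function rather than in the verification. The naive analogue of the real ``parabola'' Sidon set, namely $\{(x,x^2)\}$, fails here because the Frobenius map $x \mapsto x^2$ is $\Z_2$-linear; that set is a linear subspace and hence maximally non-Sidon. Passing to the cube is exactly the fix: in characteristic $2$ the difference $g_s$ becomes a genuine quadratic whose factorization forces the two contributing pairs to agree. (Equivalently, $x \mapsto x^3$ is the Gold almost-perfect-nonlinear function, and the Sidon condition for $A$ is precisely its APN property.)
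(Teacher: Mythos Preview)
Your proof is correct. The paper does not give its own proof of this theorem --- it is simply quoted from Lindstr\"{o}m --- but your construction $A=\{(x,x^3):x\in\mathbb{F}_{2^{d/2}}\}$ is exactly the $m=2$ instance of the construction the paper uses later (in the theorem generalizing this result) to show $\beta_{2m}(\Z_2^{mk})\geq 2^k$ via the set $\{(x,x^3,x^5,\ldots,x^{2m-1})\}$. There the verification proceeds through a Vandermonde determinant: from $\sum x_i^r=0$ for odd $r\leq 2n-1$ one squares to get it for all $r\leq 2n-1$, so the Vandermonde matrix is singular and two $x_i$ coincide. Your direct quadratic factorization $(x+u)(x+u+s)=0$ is the explicit $m=2$ computation of the same phenomenon; the two arguments are equivalent in this case, with the Vandermonde route scaling to general $m$ and yours being more self-contained.
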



\begin{theorem}\label{th:SidonExact}
$  \beta\left(\Z_2^1\right) = 2$, 
$\;\beta\left(\Z_2^2\right) = 3$,
$\;\beta\left(\Z_2^3\right) = 4,\;$ and 
$\;\beta\left(\Z_2^4\right) = 6$. 
\end{theorem}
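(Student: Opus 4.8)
The plan is to obtain each of the four equalities by matching the upper bound supplied by \cref{th:SidonUpper} with an explicit Sidon set of the claimed size.

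First, for the upper bounds I would substitute $d=1,2,3,4$ into \cref{th:SidonUpper}. This yields $\beta(\Z_2^1)\le 2$, $\beta(\Z_2^2)\le 3$, and $\beta(\Z_2^4)\le 6$ exactly, while for $d=3$ it gives $\beta(\Z_2^3)\le \sqrt{57}/2 + 1/2 < 5$, so $\beta(\Z_2^3)\le 4$ because $\beta$ is an integer. (Equivalently, the first three bounds follow at once from $\binom{|A|}{2}\le 2^d-1$.)

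Second, for the lower bounds in the first three cases I would exhibit the set consisting of the zero vector together with the $d$ standard unit vectors $e_1,\dots,e_d$. Its $\binom{d+1}{2}$ pairwise sums are the vectors $e_i$ of weight one and the vectors $e_i+e_j$ of weight two, all of which are distinct; hence this set is a Sidon set of size $d+1$. Taking $d=1,2,3$ produces Sidon sets of sizes $2,3,4$, matching the upper bounds and settling the first three equalities.

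Third, the case $d=4$ is the only one in which this construction falls short, yielding a set of size $5$ rather than the required $6$, and this is where the real work lies. Since $\binom{6}{2}=15=2^4-1$, any Sidon set of size $6$ must be \emph{perfect}: its $15$ pairwise sums have to realize each of the $15$ nonzero vectors of $\Z_2^d$ exactly once. I would write down the candidate $A=\{0000,0001,0010,0100,1000,1111\}$ and verify directly that its pairwise sums are precisely the $15$ nonzero vectors. The four sums with $0000$ give $0001,0010,0100,1000$; the six sums among $0001,0010,0100,1000$ give all weight-two vectors; and the five sums of $1111$ with the remaining elements give $1111$ together with the four weight-three vectors. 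As these $15$ sums are distinct, $A$ is a Sidon set of size $6$, completing the proof.

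The only genuine obstacle is locating the size-$6$ perfect Sidon set for $d=4$, since the generic basis construction does not reach the bound there; once the explicit set is exhibited, every remaining step is a finite verification.
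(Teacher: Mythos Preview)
Your proposal is correct and follows essentially the same approach as the paper: the upper bounds come from \cref{th:SidonUpper}, the lower bounds for $d\le 3$ come from the set $\{0,e_1,\dots,e_d\}$ of vectors with at most one nonzero coordinate, and for $d=4$ the paper adjoins the all-ones vector $(1,1,1,1)$ to this set, exactly the Sidon set you exhibit. Your write-up merely adds the explicit verification of the pairwise sums that the paper leaves implicit.
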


\begin{proof}
Let $A_d$ be the set of vectors from $\Z_2^d$ 
with at most one non-zero component. 
This is a Sidon set, and $|A_d|=d+1$ 
provides a lower estimate for $d \leq 3$. 
For $d=4$, $\;A_4$ with the addition of vector $(1,1,1,1)$ 
demonstrates that $\beta\left(\Z_2^4\right) \geq 6$.
The matching upper bounds follow from \cref{th:SidonUpper}.
\end{proof}

\section{Zero-sum-free sequences in $\Z_2^d$}\label{sec:Zero-sum}

Let $G$ be a finite abelian group with exponent $\exp(G)$ 
(that is the least common multiple of the orders of its elements). 
The Erd\H{o}s--Ginzburg--Ziv constant $s(G)$ is the smallest integer $s$ 
such that every sequence of length $s$ over $G$ 
has a zero-sum subsequence of length $\exp(G)$ 
(see 
\cite{
Edel:2007,
Ellenberg:2017,
Gao:2006,Gao:2003,Harborth:1973,Kemnitz:1983,Reiher:2007}). 
In 1961, Erd\H{o}s, Ginzburg, and Ziv \cite{Erdos:1961} proved 
$s(\Z_k)=2k-1$. 
Kemnitz' conjecture, $s(\Z_k^2)=4k-3$ (see~\cite{Kemnitz:1983}), 
was open for more than twenty years 
and finally was proved by Reiher~\cite{Reiher:2007} in 2007.

Harborth \cite{Harborth:1973} introduced constant $g(G)$ 
which is the smallest integer $g$ such that 
every subset of size $g$ in $G$ contains $\exp(G)$ elements with zero sum. 
When $\exp(G)=3$, the sum of three elements of $G$ is zero if and only if 
they form an arithmetic progression. 
It is known that 
\begin{equation}\label{eq:Z_3_d}
  s(\Z_3^d) \; = \; 2g(\Z_3^d) - 1 \; ,
\end{equation}
and $a_d = g(\Z_3^d) - 1$ is the maximum size of a cap in the affine geometry $AG(d,3)$ 
(see \cite{Edel:2007}). 
The known exact values 
(see \cite{Edel:2007,Potechin:2008}) are 
$a_2=4,\; a_3=9,\; a_4=20,\; a_5=45,\; a_6=112$. 
Ellenberg and Gijswijt \cite{Ellenberg:2017} proved 
$g(\Z_3^d) - 1 \leq \eta^d$, where 
$\eta = (3/8) \sqrt[3]{207+33\sqrt{33}} < 2.756$.
Consequently, 
\begin{equation}\label{eq:Z_3}
  s(\Z_3^d) \; \leq \;  2 \eta^d + 1 \; .
\end{equation}

The following generalization of the Erd\H{o}s--Ginzburg--Ziv constant
was introduced by Gao~\cite{Gao:2003}. 
If $r$ is a multiple of $\exp(G)$ 
then $s_r(G)$ denotes the smallest integer $s$
such that every sequence of length $s$ over $G$ 
has a zero-sum subsequence of length $r$. 
(Notice that if $r$ is not a multiple of $\exp(G)$ 
then there is an element $x \in G$ whose order is not a divisor of $r$,
and the infinite sequence 
$x,x,x,\ldots$ contains no zero-sum subsequence of length $r$.) 
Obviously, $s_{\exp(G)}(G)=s(G)$. 
Constants $s_r(G)$ were studied in 
\cite{Bitz:2017,Gao:2014,Gao:2006b,Gao:2003,Han:2018,He:2016,Kubertin:2005}.
In the case when $k$ is a power of a prime, 
Gao proved $s_{km}(\Z_k^d) = km + (k-1)d$ for $m \geq k^{d-1}$ 
(see \cite{Gao:2006b,Kubertin:2005}) 
and conjectured that
\begin{equation}\label{eq:GaoConj}
  s_{km}(\Z_k^d) \; = \; km + (k-1)d
    \;\;\;\;\mbox{for}\;\; km > (k-1)d \; .
\end{equation}

The Harborth constant $g(G)$ allows a similar generalization. 
We say that $A \subseteq G$ is a {\it zero-free set of rank} $r$ 
if the sum of any $r$ distinct elements of $A$ is non-zero. 
When $r$ is a multiple of $\exp(G)$, we denote 
the largest size of such set by $\beta_r(G)$. 
Obviously, $\beta_{\exp(G)}(G) = g(G) - 1$. 

In \cref{sec:Sidon}, we studied $\beta(\Z_2^d)$, 
the largest size of a Sidon set in $\Z_2^d$. 
It is easy to see that a zero-free set of rank $4$ in $\Z_2^d$ 
is the same as a Sidon set. 
Hence,
$
  \beta_4(\Z_2^d) = \beta(\Z_2^d) .
$
Note that a zero-free set of rank $2m$ in $\Z_2^d$, where $m \geq 3$, 
may contain different $m$-subsets with the same sum, 
for example, $x_1+x_2+x_3 = x_1+x_4+x_5$. 
Nevertheless, we will prove that both 
$\beta_{2m}(\Z_2^d)$ and $s_{2m}(\Z_2^d)$ are of order
$2^{d/m}$ as $d\rightarrow\infty$.

\begin{theorem}\label{th:multiset}
\[
  s_{2m}(\Z_2^d) \; \leq \; \beta_{2m}(\Z_2^d) \; + \; 2m-1 \; .
\]
\end{theorem}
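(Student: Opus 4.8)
The plan is to show that any sequence $S$ over $\Z_2^d$ of length $s = \beta_{2m}(\Z_2^d) + 2m - 1$ contains a zero-sum subsequence of length $2m$; by the definition of $s_{2m}$ this immediately yields the claimed inequality. I would abbreviate $\beta = \beta_{2m}(\Z_2^d)$ and let $A$ denote the set of \emph{distinct} values occurring in $S$, writing $t = |A|$ and letting $\mu_x$ be the multiplicity of $x$ in $S$, so that $\sum_{x \in A} \mu_x = s$. The argument splits into two cases according to whether $t$ exceeds $\beta$.

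First I would handle the case $t > \beta$. Since $\beta$ is the largest size of a zero-free set of rank $2m$, a collection of $t > \beta$ distinct elements cannot be zero-free of rank $2m$; hence some $2m$ distinct elements of $A$ sum to zero. As each of these elements occurs at least once in $S$, they constitute a zero-sum subsequence of length $2m$, and we are done.

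The remaining case is $t \leq \beta$. Here I would use the fact that every element of $\Z_2^d$ has order dividing $2$, so any two equal entries of $S$ sum to zero. The number of pairwise disjoint pairs of equal entries one can extract is $\sum_{x \in A} \lfloor \mu_x/2 \rfloor$, and using $\lfloor \mu_x/2 \rfloor \geq (\mu_x - 1)/2$ together with $t \le \beta$ gives
\[
  \sum_{x \in A} \left\lfloor \frac{\mu_x}{2} \right\rfloor
  \;\geq\; \frac{s - t}{2}
  \;\geq\; \frac{(\beta + 2m - 1) - \beta}{2}
  \;=\; m - \frac{1}{2} .
\]
Since the left-hand side is an integer, it is at least $m$. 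Selecting $m$ of these disjoint pairs produces $2m$ entries whose total sum is zero, i.e.\ a zero-sum subsequence of length $2m$.

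The step that I expect to demand the most care is the bookkeeping in the second case: one must ensure that the $m$ chosen equal-pairs are genuinely disjoint as index sets, so that together they form a length-$2m$ subsequence rather than reusing any entry, and that the integrality rounding from $m - \tfrac12$ up to $m$ is applied correctly. The first case, by contrast, follows directly from the definition of $\beta_{2m}(\Z_2^d)$.
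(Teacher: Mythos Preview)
Your proposal is correct and follows essentially the same argument as the paper's proof: both split on whether the number of distinct values exceeds $\beta$, invoke the definition of $\beta_{2m}$ in the first case, and in the second case count the available equal-pairs via $\sum_x \lfloor \mu_x/2\rfloor \ge (s-t)/2 \ge m-\tfrac12$ and round up by integrality. The only cosmetic difference is that the paper phrases the second case using $k'(x)=2\lfloor k(x)/2\rfloor$ and then picks even numbers $k''(x)\le k'(x)$ summing to $2m$, which is exactly your selection of $m$ disjoint pairs written in different notation.
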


\begin{proof}
Consider a sequence $S$ of length $\beta + 2m-1$ over $\Z_2^d$ 
where $\beta = \beta_{2m}(\Z_2^d)$. 
We are going to show that $S$ contains 
a zero-sum subsequence of size $2m$. 
For each $x \in \Z_2^d$, denote by $k(x)$ 
the number of appearances of $x$ in $S$. 
Let $B$ be the set of elements $x \in \Z_2^d$ 
such that $k(x) \geq 1$. 
If $|B| > \beta$, 
a zero-sum subsequence exists by the definition of 
$\beta_{2m}(\Z_2^d)$.
We may assume $|B| \leq \beta$. 
Let $k'(x)$ be the largest even number that does not exceed $k(x)$. 
Then 
\[ \sum_{x \in B} k'(x) \geq \sum_{x \in B} (k(x)-1) 
   = \sum_{x \in B} k(x) - |B| = (\beta + 2m-1) - |B| \geq 2m-1 .
\]
Since the values of $k'(x)$ are even, $\sum_{x \in B} k'(x) \geq 2m$.
Select a set of even numbers $k''(x)$ 
such that $k''(x) \leq k'(x)$ and $\sum_{x \in B} k''(x) = 2m$. 
Then $k''(x)$ appearances of every $x \in B$ in $S$ 
constitute a zero-sum subsequence of length $2m$. 
\end{proof}

From \cref{th:SidonUpper,th:multiset} we get

\begin{corollary}\label{th:s_4}
\[
  s_4(\Z_2^d) \; \leq \; \sqrt{2^{d+1} - \frac{7}{4}} + \frac{7}{2} \; .
\]
\end{corollary}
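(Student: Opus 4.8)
The plan is to obtain this bound by specializing \cref{th:multiset} to the case $m=2$ and then chaining it with the Sidon bound of \cref{th:SidonUpper}. First I would set $m=2$ in \cref{th:multiset}, which gives $s_4(\Z_2^d) \leq \beta_4(\Z_2^d) + 3$. The next step relies on the observation made just before \cref{th:multiset}, namely that in $\Z_2^d$ a zero-free set of rank $4$ is exactly a Sidon set, so $\beta_4(\Z_2^d) = \beta(\Z_2^d)$. Substituting this identity yields $s_4(\Z_2^d) \leq \beta(\Z_2^d) + 3$.

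Finally I would invoke \cref{th:SidonUpper}, namely $\beta(\Z_2^d) \leq \sqrt{2^{d+1} - 7/4} + 1/2$, and add the constant $3$ to the right-hand side. Since $1/2 + 3 = 7/2$, this produces $s_4(\Z_2^d) \leq \sqrt{2^{d+1} - 7/4} + 7/2$, which is precisely the claimed inequality.

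Because all the substantive content is already carried by \cref{th:SidonUpper,th:multiset}, there is essentially no obstacle in the present statement; it is a direct combination of the two preceding results. The only point deserving a moment of care is the identity $\beta_4(\Z_2^d)=\beta(\Z_2^d)$. This reduces to the fact that over $\Z_2^d$ the vanishing sum $x_1+x_2+x_3+x_4=0$ of four \emph{distinct} elements is equivalent (using $-x=x$) to the equation $x_1+x_2=x_3+x_4$ between two disjoint pairs, which is exactly a violation of the Sidon property; conversely, any Sidon collision $a+b=c+d$ either furnishes four distinct elements summing to zero or forces $\{a,b\}=\{c,d\}$, so the two conditions coincide. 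With that equivalence in hand, the corollary follows from the stated arithmetic.
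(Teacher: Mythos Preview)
Your argument is correct and matches the paper's approach exactly: the corollary is stated there simply as an immediate consequence of \cref{th:SidonUpper,th:multiset}, together with the identification $\beta_4(\Z_2^d)=\beta(\Z_2^d)$ noted in the text. There is nothing to add.
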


\begin{theorem}\label{th:tilde4}
\[
  s_4(\Z_2^d) \; = \; \beta(\Z_2^d) \; + \; 3 \; .
\]
\end{theorem}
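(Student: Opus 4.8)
The upper bound is already in hand: a zero-free set of rank $4$ in $\Z_2^d$ is precisely a Sidon set, so $\beta_4(\Z_2^d) = \beta(\Z_2^d)$, and \cref{th:multiset} with $m=2$ gives $s_4(\Z_2^d) \le \beta(\Z_2^d) + 3$. The plan is therefore to prove the matching lower bound by constructing a sequence over $\Z_2^d$ of length $\beta(\Z_2^d) + 2$ that contains no zero-sum subsequence of length $4$; such a sequence forces $s_4(\Z_2^d) > \beta(\Z_2^d) + 2$, which is exactly the desired inequality.

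First I would classify the zero-sum quadruples. Since every element of $\Z_2^d$ is its own inverse, a length-$4$ subsequence summing to $0$ must, as a multiset, take one of three forms: (i) one value with multiplicity four; (ii) two distinct values each with multiplicity two, $\{x,x,y,y\}$; or (iii) four distinct values $a,b,c,d$ with $a+b+c+d=0$, equivalently $a+b=c+d$. The remaining multiplicity patterns $(3,1)$ and $(2,1,1)$ reduce the sum to that of two distinct elements and so are never zero. Thus it suffices to rule out (i), (ii), and (iii) simultaneously.

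The construction then writes itself: take a maximum Sidon set $A$ with $|A| = \beta(\Z_2^d)$, fix an element $a_0 \in A$, and let $S$ list each element of $A$ once together with two additional copies of $a_0$, so that $a_0$ occurs three times and $|S| = \beta(\Z_2^d) + 2$. Form (iii) cannot occur because the support of $S$ equals the Sidon set $A$, which by definition contains no four distinct elements with $a+b=c+d$; form (ii) cannot occur because $a_0$ is the only value of multiplicity greater than one; and form (i) cannot occur because no value is repeated four times. The same bookkeeping shows that $\beta(\Z_2^d)+2$ is in fact the largest attainable length (support of size at most $\beta(\Z_2^d)$, at most one repeated value, that value repeated at most three times), so the construction is tight. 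The only delicate point, and the one I would treat carefully, is the complete and correct enumeration of the zero-sum multisets of size four in the first step; granting that, the verification of the construction is mechanical.
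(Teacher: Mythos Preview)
Your proposal is correct and follows essentially the same approach as the paper: both use \cref{th:multiset} for the upper bound and, for the lower bound, append two extra copies of one element of a maximum Sidon set to obtain a sequence of length $\beta(\Z_2^d)+2$ with no zero-sum $4$-subsequence. Your explicit classification of the possible multiplicity patterns $(4)$, $(2,2)$, $(1,1,1,1)$, $(3,1)$, $(2,1,1)$ simply spells out what the paper leaves as a one-line assertion.
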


\begin{proof}
Let $A=\left\{x_1,x_2,\ldots,x_\beta\right\}$ 
be a Sidon set in $\Z_2^d$ where $\beta = \beta(\Z_2^d)$. 
Notice that in the sequence $x_1,x_2,\ldots,x_\beta$ 
all subsequences of size $2$ and $4$ have non-zero sums. 
Consider the sequence 
$x_1,x_2,\ldots,x_\beta,x_{\beta + 1},x_{\beta + 2}$ 
where  $x_{\beta + 2}=x_{\beta + 1}=x_\beta$. 
All 4-element subsequences of this sequence will have non-zero sums. 
Hence, $s_4(\Z_2^d) \geq \beta(\Z_2^d) + 3$. 
The opposite inequality follows from \cref{th:multiset}.
\end{proof}

\begin{theorem}\label{th:upper_general}
For each $m$, there is a constant $C_m$ such that
\[
  \beta_{2m}(\Z_2^d) \; \leq \; C_m 2^{d/m} + O(1) 
    \;\;{\rm as}\;\; d\rightarrow\infty \; .
\]
\end{theorem}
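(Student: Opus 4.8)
The plan is to fix a rank-$2m$ zero-free set $A\subseteq\Z_2^d$, write $n=|A|$ and $\sigma(S)$ for the sum of the elements of a subset $S$, and bound $n$ by measuring how badly the ``sum'' map can collide. For $v\in\Z_2^d$ let $r(v)$ be the number of $m$-subsets $S$ with $\sigma(S)=v$, so that $\sum_v r(v)=\binom{n}{m}$ and at most $2^d$ values are attained. The key reformulation is that two \emph{disjoint} $m$-subsets have equal sum exactly when their union is a zero-sum $2m$-subset; thus the hypothesis says precisely that every fiber of the sum map is an intersecting family. By Cauchy--Schwarz,
\[
  \binom{n}{m}^2=\Bigl(\sum_v r(v)\Bigr)^2\leq\bigl|\{v:r(v)>0\}\bigr|\cdot\sum_v r(v)^2\leq 2^d\sum_v r(v)^2,
\]
so it would suffice to prove a bound of the form $\sum_v r(v)^2\leq C_m'\binom{n}{m}$: this gives $\binom{n}{m}\leq C_m'2^d$ and hence $n\leq C_m2^{d/m}+O(1)$.

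Next I would evaluate $\sum_v r(v)^2$, which counts ordered pairs $(S,T)$ of $m$-subsets with $\sigma(S)=\sigma(T)$, by splitting according to $i=|S\cap T|$. The diagonal $i=m$ contributes $\binom{n}{m}$; the case $i=m-1$ is impossible (it forces a repeated element) and $i=0$ is forbidden (it produces a zero-sum $2m$-subset). For $1\leq i\leq m-2$ the symmetric difference $S\triangle T$ is a zero-sum subset of size $2(m-i)$, and every ordered bipartition of such a set into two equal halves yields such a pair; writing $Z_k$ for the number of zero-sum $2k$-subsets of $A$, this gives the exact identity
\[
  \sum_v r(v)^2=\binom{n}{m}+\sum_{k=2}^{m-1}\binom{2k}{k}\binom{n-2k}{m-k}\,Z_k .
\]
Since $\binom{n-2k}{m-k}=\Theta_m(n^{m-k})$, the desired estimate reduces to controlling $Z_k$ for $2\leq k\leq m-1$, i.e.\ bounding the number of lower-order zero-sum subsets. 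For $m=2$ the sum is empty, each fiber has size at most $1$, and the argument collapses exactly to the Sidon estimate of \cref{th:SidonUpper}; this provides the base case of an induction on $m$.

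The heart of the matter—and the step I expect to be the main obstacle—is bounding the $Z_k$. This is genuinely delicate, because the rank conditions at different levels are independent: being rank-$2m$ zero-free imposes no direct restriction on zero-sum $2k$-subsets for $k<m$, so the trivial estimate $Z_k\leq\binom{n}{2k-1}$ is available but far too weak, and one cannot simply invoke the inductive hypothesis. This is exactly where the $r$-graph machinery enters. I would form the $2k$-uniform hypergraph $W_k$ on vertex set $A$ whose edges are the zero-sum $2k$-subsets, so $\mathrm{e}(W_k)=\Delta_0(W_k)=Z_k$, and observe that its top codegree satisfies $\Delta_{2k-1}(W_k)\leq 1$, since the sum of any $2k-1$ elements determines the unique admissible final element. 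The monotonicity inequality~(\ref{monotonicity}) then lets one trade this trivial top codegree against an intermediate codegree $\Delta_k(W_k)$, which is itself a count of $k$-subsets realizing a prescribed sum. The difficulty is that such fiber counts are not bounded by the hypothesis alone (overlapping $k$-subsets may freely share a sum), so the induction on $m$ must be arranged to control these intermediate codegrees at all levels simultaneously, while using that the zero-sum conditions are translation invariant because $2m$ is even. Pushing this bootstrap through is the technical core; the resulting constant $C_m$ will not be optimal, but it is uniform in $d$, which is all that is claimed.
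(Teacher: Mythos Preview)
Your Cauchy--Schwarz framework and the identity
\[
  \sum_v r(v)^2=\binom{n}{m}+\sum_{k=2}^{m-1}\binom{2k}{k}\binom{n-2k}{m-k}Z_k
\]
are correct, and reducing to $Z_k=O_m(n^k)$ is a legitimate reformulation. The gap is that you do not prove this bound, and the plan you sketch does not reach it. Monotonicity from $\Delta_{2k-1}(W_k)\le 1$ gives only $Z_k=O(n^{2k-1})$; pushing to $\Delta_k(W_k)$ would require $\Delta_k(W_k)=O(1)$, i.e.\ that for every $k$-subset $B$ only boundedly many disjoint $k$-subsets share its sum. This is exactly the kind of fiber bound you said cannot be read off the hypothesis, and it is genuinely false without further work: already for $m=3$, $k=2$, arbitrarily many disjoint pairs can share a sum $v$ (their union over any three of them has sum $3v=v\neq 0$, so no zero-sum $6$-set is forced). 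An induction on $m$ is also unavailable, since a rank-$2m$ zero-free set carries no rank-$2k$ hypothesis for $k<m$. So the ``bootstrap'' you allude to has no base to stand on.

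The paper bypasses $Z_k$ entirely and instead bounds the fibers $r(v)$ directly. It calls an $r$-subset \emph{exceptional} if it has $\lambda_r$ pairwise disjoint companions with the same sum, where $\lambda_r$ is chosen so that two exceptional $r$-subsets meeting in exactly $q(r)$ elements (with $q(r)\equiv -m\pmod r$) can be completed, using some of those companions, to a zero-sum $2m$-subset; this is the single place the rank-$2m$ hypothesis is used. Erd\H{o}s--Ko--Rado then caps the number of exceptional $r$-subsets by a polynomial of degree $r-1$, so only $O(n^{m-1})$ of the $m$-subsets contain one. For the remaining (non-exceptional) $m$-subsets, each fiber has matching number at most $\lambda_r$, and an induction on $r$ (not on $m$) via a bounded-matching lemma yields $r(v)\le N_m$ for an absolute constant. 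Hence $\binom{n}{m}-O(n^{m-1})\le N_m\cdot 2^d$, and the theorem follows without Cauchy--Schwarz. The missing idea in your outline is precisely this ``exceptional subset'' mechanism for turning large disjoint fibers into a zero-sum $2m$-set.
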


A subset of edges in an $r$-graph is called {\it independent} 
if they are pairwise disjoint. 
In order to prove \cref{th:upper_general}, 
we need the following two lemmas.

\begin{lemma}\label{th:lemma}
If an $r$-graph $H$ has no more than $\lambda$ independent edges, then 
${\mathrm e}(H) \leq \lambda \cdot (1 + r \cdot (\Delta_1(H)-1))$.
\end{lemma}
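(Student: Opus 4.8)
The plan is to analyze a maximal collection of independent (pairwise disjoint) edges and exploit its maximality. Let $M \subseteq E(H)$ be a maximal set of independent edges (one that cannot be enlarged), and write $\nu = |M|$; since $M$ is itself a set of independent edges, $\nu \leq \lambda$. Let $U = \bigcup_{e \in M} e$ be the set of vertices covered by $M$, so that $|U| = r\nu$ because the members of $M$ are pairwise disjoint $r$-sets. The first key observation is that, by maximality of $M$, every edge of $H$ not belonging to $M$ must meet $U$: otherwise such an edge would be disjoint from all members of $M$ and could be adjoined to produce a larger independent set, a contradiction.

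The second, more delicate, point is to count the non-matching edges without overshooting. I would split $E(H)$ into the $\nu$ edges of $M$ and the remaining edges, each of which contains at least one vertex of $U$ by the previous paragraph. For a fixed vertex $v \in U$, note that $v$ lies in exactly one edge of $M$ (the members of $M$ being disjoint), so among the at most $\Delta_1(H)$ edges through $v$, one is already accounted for in $M$; hence $v$ is contained in at most $\Delta_1(H) - 1$ edges of $E(H) \setminus M$. A union bound over the $r\nu$ vertices of $U$ gives $|E(H) \setminus M| \leq r\nu\,(\Delta_1(H) - 1)$, and therefore
\[
  {\mathrm e}(H) \;=\; \nu + |E(H) \setminus M| \;\leq\; \nu\bigl(1 + r(\Delta_1(H) - 1)\bigr).
\]
Since $\nu \leq \lambda$ and the factor $1 + r(\Delta_1(H)-1) \geq 1 > 0$ (we may assume $H$ has an edge, so that $\Delta_1(H) \geq 1$), replacing $\nu$ by $\lambda$ yields the claimed inequality.

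The main obstacle is closing the gap between the crude estimate and the stated bound. The naive double count — every edge meets $U$, and $U$ has $r\nu$ vertices each of degree at most $\Delta_1(H)$ — only gives ${\mathrm e}(H) \leq r\nu\,\Delta_1(H)$, which is weaker than the target by a term of order $\lambda(r-1)$. The improvement comes precisely from the bookkeeping above: each covered vertex already spends one of its incident edges on the matching itself, so it can account for at most $\Delta_1(H)-1$ further edges. Getting this $-1$ correct, and checking that the resulting coefficient remains nonnegative so that the substitution $\nu \leq \lambda$ is legitimate, is the only genuinely nontrivial part of the argument.
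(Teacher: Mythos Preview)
Your proof is correct. The core idea is identical to the paper's: each matching edge, together with the other edges through its $r$ vertices, accounts for at most $1 + r(\Delta_1(H)-1)$ edges of $H$. The paper packages this as an induction on $\lambda$ --- pick an edge, delete it and all edges meeting it (at most $1 + r(\Delta_1(H)-1)$ edges), observe the matching number drops by at least one, and apply the inductive hypothesis. You instead take a maximal matching $M$ all at once and do the bookkeeping globally: every non-matching edge meets $U = \bigcup M$, and each vertex of $U$ has at most $\Delta_1(H)-1$ incident non-matching edges because one of its incident edges is already in $M$. Your version is a little more direct and avoids the induction, while the paper's version makes the per-edge contribution $1 + r(\Delta_1(H)-1)$ explicit at each step; otherwise the two arguments are interchangeable.
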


\begin{proof}
We will use induction on $\lambda$. 
The basis for $\lambda=0$ is trivial. 
Suppose, the statement of the lemma holds for $\lambda < k$. 
We will show that it holds for $\lambda = k$ as well. 
Select an arbitrary edge $A$ in $H$ and remove $r$ vertices that form $A$ 
together with all edges that intersect $A$. 
The resulting $r$-graph $H_1$ has no more than $k-1$ independent edges, 
hence 
${\mathrm e}(H_1) \leq (k-1)(1 + r \cdot (\Delta_1(H_1)-1))$. 
The number of edges we have removed is at most 
$1 + r \cdot (\Delta_1(H)-1)$, 
hence 
${\mathrm e}(H) \leq {\mathrm e}(H_1) + 1 + r \cdot (\Delta_1(H)-1)
  \leq k \cdot (1 + r \cdot (\Delta_1(H)-1))$.
\end{proof}

\begin{lemma}[The Erd\H{o}s--Ko--Rado theorem \cite{Erdos_Ko_Rado}]\label{th:EKR}
Let $H$ be an $r$-graph with $n \geq 2r$ vertices. 
If every pair of edges in $H$ has non-empty intersection, 
then ${\mathrm e}(H) \leq \binom{n-1}{r-1}$.
\end{lemma}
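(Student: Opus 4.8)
The plan is to prove this via Katona's cyclic permutation (circle) method, which delivers the bound through a clean double count. First I would set up the counting object: consider all $n!$ ways to arrange the $n$ vertices around a cycle with labeled positions $1,2,\ldots,n$, and call a set of $r$ vertices occupying $r$ consecutive positions an \emph{arc}. I would then count, in two ways, the number of pairs $(\sigma, E)$ in which $\sigma$ is such a cyclic arrangement and $E \in E(H)$ appears as an arc under $\sigma$.

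The heart of the argument is the following local claim: for any fixed cyclic arrangement, at most $r$ of the $n$ arcs can belong to an intersecting family. To prove it, I would fix one arc $A$ that is an edge of $H$ and examine which other arcs meet $A$. An arc different from $A$ intersects it precisely when it begins at one of the $2(r-1)$ positions offset from the start of $A$ by $\pm 1, \ldots, \pm(r-1)$. These $2(r-1)$ arcs split into $r-1$ pairs, where the two arcs in each pair are \emph{disjoint}; here the hypothesis $n \geq 2r$ is exactly what guarantees disjointness, since two arcs of length $r$ on a cycle of length at least $2r$ can avoid each other. Because the edges form an intersecting family, at most one arc from each disjoint pair can be present, giving at most $r-1$ arcs besides $A$, hence at most $r$ arcs in total.

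With the claim in hand, the double count concludes the proof. On one side, since each of the $n!$ arrangements contributes at most $r$ arcs that are edges of $H$, the number of pairs is at most $r \cdot n!$. On the other side, each fixed edge $E$ appears as an arc in exactly $n \cdot r! \cdot (n-r)!$ arrangements: choose the block of $r$ consecutive positions ($n$ choices), order $E$ within it ($r!$), and order the remaining vertices ($(n-r)!$). Equating gives ${\mathrm e}(H) \cdot n \cdot r! \cdot (n-r)! \leq r \cdot n!$, and simplifying with the identity $\frac{r}{n}\binom{n}{r} = \binom{n-1}{r-1}$ yields ${\mathrm e}(H) \leq \binom{n-1}{r-1}$.

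I expect the main obstacle to be the local claim: correctly enumerating the arcs meeting a fixed arc and verifying the disjoint pairing. This is the step where $n \geq 2r$ is genuinely used, and getting the pairing exactly right — so that each pair is truly disjoint and every intersecting arc is accounted for — is the delicate part. Once the claim is established, the double count itself is entirely routine.
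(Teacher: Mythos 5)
Your proof is correct, and it is necessarily a different route from the paper's, because the paper does not prove this lemma at all: it states the Erd\H{o}s--Ko--Rado theorem as a known result and cites the original paper, whose argument proceeds by shifting (compression) and induction on $n$ and $r$. What you supply instead is Katona's cycle method, which is the standard short self-contained proof, and your double count is carried out accurately: each edge appears as an arc in exactly $n \cdot r! \cdot (n-r)!$ of the $n!$ labeled circular arrangements, and combining this with the local claim gives ${\mathrm e}(H) \leq \frac{r}{n}\binom{n}{r} = \binom{n-1}{r-1}$ as you state. The one place you leave the argument implicit --- and you correctly flag it as the delicate step --- is the pairing in the local claim. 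For the record, the pairing that works is: for $j = 1, \ldots, r-1$, match the arc starting at offset $+j$ from the start of $A$ with the arc starting at offset $j - r$ (i.e., the arc ending immediately before position $i+j$, where $A$ starts at $i$). These two arcs together occupy $2r$ consecutive positions, which are pairwise distinct on the cycle precisely because $n \geq 2r$, so they are disjoint; and as $j$ runs over $1, \ldots, r-1$, the offsets $j-r$ run over $-(r-1), \ldots, -1$, so all $2(r-1)$ arcs meeting $A$ are covered exactly once. Note that the naive pairing of $+j$ with $-j$ would fail (those arcs overlap heavily), so the appeal to ``two arcs of length $r$ on a cycle of length at least $2r$ can avoid each other'' does not by itself produce the pairing; with the explicit matching above, your proof is complete. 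In terms of what each approach buys: the citation keeps the paper short, the shifting proof of the original generalizes to the full EKR theorem with the $t$-intersecting refinements, while your cycle argument is the cleanest way to get exactly the bound $\binom{n-1}{r-1}$ needed here.
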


\begin{proof}[Proof of \cref{th:upper_general}]
For a subset $X \subset \Z_2^d$, 
let $\Sigma(X)$ denote the sum of its elements. 
Let $n=\beta_{2m}(\Z_2^d)$, 
and $A \subset \Z_2^d$ be a zero-free set of rank $2m$ and size $n$. 
For each $r=2,3,\ldots,m$, let $q(r)$ denote 
the integer $q\in\{0,1,\ldots,r-1\}$ such that 
$m+q \equiv 0 \pmod{r}$. 
Denote $\lambda_r = 2(m+q(r))/r + 2r-q(r)-3$ if $q(r)>0$, 
and $\lambda_r = 2m/r - 1$ if $q(r)=0$. 
It is easy to see that $\lambda_r$ is a positive integer. 
We say that an $r$-subset $X \subseteq A$ is {\it exceptional} 
if $q(r)>0$ 
and there exist $r$-subsets $X_1,X_2,\ldots,X_{\lambda_r} \subseteq A$ 
such that $X_1,X_2,\ldots,X_{\lambda_r},X$ are pairwise disjoint 
and $\Sigma(X_1)=\Sigma(X_2)=\ldots=\Sigma(X_{\lambda_r})=\Sigma(X)$. 

Our first step will be to prove that if $q(r)>0$ 
then two exceptional $r$-subsets can not have intersection of size $q(r)$. 
Indeed, let $X$ and $Y$ be exceptional $r$-subsets and 
$|X \cap Y| = q(r)$. 
There exist $r$-subsets $X_1,X_2,\ldots,X_\lambda$ 
and $Y_1,Y_2,\ldots,Y_\lambda$ such that 
$X_1,X_2,\ldots,X_\lambda,X$ are pairwise disjoint, 
$Y_1,Y_2,\ldots,Y_\lambda,Y$ are pairwise disjoint, 
$\Sigma(X_1)=\Sigma(X_2)=\ldots=\Sigma(X_\lambda)=\Sigma(X)$, and 
$\Sigma(Y_1)=\Sigma(Y_2)=\ldots=\Sigma(Y_\lambda)=\Sigma(Y)$, 
where $\lambda=\lambda_r$. 
It is possible that $\Sigma(X)=\Sigma(Y)$ 
and $X_i=Y_j$ for some $i,j$. 
Notice that $X-Y$ can intersect at most $r-q(r)$ subsets among 
$Y_1,Y_2,\ldots,Y_\lambda$. 
As $\lambda > r-q(r)$, there is an index $j$ such that 
$X \cap Y_j = \emptyset$. 
Similarly, $Y-X$ can intersect at most $r-q(r)$ subsets among 
$X_1,X_2,\ldots,X_\lambda$. 
Also, $Y_j$ can intersect at most $r$ subsets among 
$X_1,X_2,\ldots,X_\lambda$. 
Since $\lambda - (r-q(r)) - r = 2k-3$ with $k=(m+q(r))/r$, 
there exist $2k-3$ indices 
$1 \leq i_1 < i_2 < \ldots < i_{2k-3} \leq \lambda$ such that 
$\left(X_{i_1} \cup X_{i_2} \cup \ldots \cup X_{i_{2k-3}}\right)
 \cap (Y \cup Y_j) = \emptyset$. 
Among $2k$ subsets $X,X_{i_1},X_{i_2},\ldots,X_{i_{2k-3}},Y,Y_j$, 
the only pair with non-empty intersection is $\{X,Y\}$. 
Let 
\[
  B \; = \; (X \cup X_{i_1} \cup X_{i_2} \cup \ldots \cup X_{i_{2k-3}}
  \cup Y \cup Y_j) \; - \; (X \cap Y) \; .
\]
Then $|B| = 2kr - 2 |X \cap Y| = 2kr - 2q(r) = 2m$ 
and $\Sigma(B) = (2k-2)\Sigma(X) + 2\Sigma(Y) - 2\Sigma(X \cap Y) = 0$ 
which contradicts the assumption 
that $A$ is a zero-free set of rank $2m$. 

Our second step is to obtain an upper bound 
on the number of exceptional $r$-subsets. 
Fix $B \subset A$ where $|B| = q(r) > 0$ 
and consider a family $\mathcal{F}_B$ of subsets $F \subset A-B$ 
such that $|F|=r-q(r)$ and $F \cup B$ is an exceptional $r$-subset. 
Then any two members of $\mathcal{F}_B$ must have non-empty intersection. 
Since $n = \beta_{2m}(\Z_2^d) \geq 2m-1$ and $r < m$, 
we have $|A-B| = n-q(r) \geq 2(r-q(r))$. 
By \cref{th:EKR}, 
$|\mathcal{F}_B| \leq \binom{|A-B|-1}{r-q(r)-1} = \binom{n-q(r)-1}{r-q(r)-1}$. 
Then the total number of exceptional $r$-subsets is at most
\[
  \binom{n}{q(r)} \binom{n-q(r)-1}{r-q(r)-1}
    \; = \;
  \frac{r-q(r)}{n-q(r)} \binom{n}{r} \binom{r}{q(r)} \; ,
\]
which is a polynomial in $n$ of degree $r-1$.

In the case $q(r)>0$, let $G_r$ denote an $m$-graph with vertex-set $A$ 
where an $m$-subset $B \subseteq A$ is an edge 
if $B$ contains an exceptional $r$-subset. 
Then 
$
  {\mathrm e}(G_r)
    \leq 
  \frac{r-q(r)}{n-q(r)} \binom{n}{r} \binom{r}{q(r)} \binom{n-r}{m-r}
$. 
Denote 
\[
  P_m(n) \; = \; 
  \sum_{\stackrel{\scriptstyle r=2}{q(r)>0}}^{m-1} 
    \frac{r-q(r)}{n-q(r)} \binom{n}{r} \binom{n-r}{m-r} \binom{r}{q(r)}
  \; .
\]
Notice that $P_m(n)$ 
is a polynomial in $n$ of degree at most $m-1$. 
For $r=1,2,\ldots,m$ and $z\in\Z_2^d$, 
we denote by $H_r(z)$ an $r$-graph with vertex set $A$ 
whose edges are $r$-subsets $X$ such that 
$\Sigma(X)=z$ and 
$X$ does not contain an exceptional subset.
Notice that 
\[
  \sum_{z\in\Z_2^d} {\mathrm e}(H_m(z))
  \;\; \geq  \;\;
  \binom{n}{m} - 
    \sum_{\stackrel{\scriptstyle r=2}{q(r)>0}}^{m-1} {\mathrm e}(G_r)
  \;\; \geq \;\;
  \binom{n}{m} \; - \; P_m(n) \; .
\]

As the third step, we will obtain an upper bound on ${\mathrm e}(H_r(z))$. 
Let $N_1=1$ and 
$N_r = \lambda_r \cdot (1 + r \cdot (N_{r-1} - 1))$ 
for $r=2,3,\ldots,m$. 
We are going to prove ${\mathrm e}(H_r(z)) \leq N_r$ 
for every $r \leq m$ and every $z \in\Z_2^d$. 
We will use induction on $r$. 
The case $r=1$ serves as the induction base.  
Indeed, $H_1(z)$ has either $1$ edge 
(that is $z$ itself) if $z \in A$, 
or no edges if $z \notin A$. 
Now we will prove the induction step from $r-1$ to $r$. 
Notice that the degree of vertex $x$ in $H_r(z)$ 
is at most ${\mathrm e}(H_{r-1}(z+x)) \leq N_{r-1}$. 
If $q(r)>0$ 
then  $H_r(z)$ has no exceptional $r$-subset as its edge, 
hence, it has at most $\lambda_r$ independent edges.
If  $q(r)=0$, then $r$ is a divisor of $m$, 
and $H_r(z)$ can not have $2m/r = \lambda_r + 1$ independent edges: 
their union would be an $(2m)$-subset with zero sum. 
We apply \cref{th:lemma} to $H_r(z)$ with 
$\lambda = \lambda_r$ and $\Delta_1(H_r(z)) \leq N_{r-1}$,
to get 
$
  {\mathrm e}(H_r(z)) \leq
  \lambda_r \cdot (1 + r \cdot (N_{r-1} - 1)) = N_r \; .
$ 

We recall that 
$\sum_{z\in\Z_2^d} {\mathrm e}(H_m(z)) \geq \binom{n}{m} - P_m(n)$,
where $P_m(n)$ is a polynomial of order less than $m$. 
On the other hand, ${\mathrm e}(H_m(z)) \leq N_m$ 
for every $z\in\Z_2^d$. 
Therefore, 
$\binom{n}{m} - P_m(n) \leq 2^d N_m$. 
Since $n=\beta_{2m}(\Z_2^d)$, we get
$\beta_{2m}(\Z_2^d) \leq \left(2^d m! N_m\right)^{1/m} + O(1)$ 
as $d\rightarrow\infty$. 
\end{proof}

\begin{remark}
In the proof of \cref{th:upper_general}, one may estimate 
$\lambda_r < 2\left(\frac{m}{r}+r\right)$, and hence, 
$\left(C_m\right)^m < m! \prod_{r=2}^m r\lambda_r < m! \prod_{r=2}^m 2(m+r^2)$. 
This implies $C_m = O(m^3)$ as $m\rightarrow\infty$. 
For small $m$, 
$C_3=60^{1/3} < 3.9149$ and $C_4=3288^{1/4} < 7.5724$.
\end{remark}

The next result is a generalization of \cref{th:SidonLower}. 

\begin{theorem}\label{th:construction}
If $d$ is a multiple of $m$ then 
\[
  \beta_{2m}(\Z_2^d) \; \geq \; 2^{d/m} \; ,
                     \;\;\;\;\;\;\;\;
      s_{2m}(\Z_2^d) \; \geq \; 2^{d/m} + 2m - 1 \; .
\]
\end{theorem}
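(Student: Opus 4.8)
The plan is to realize $\Z_2^d$ as $\mathbb{F}_q^m$ with $q=2^{d/m}$ and exhibit an explicit zero-free set of rank $2m$. Since $d=km$, I write $q=2^{k}=2^{d/m}$ and identify $\mathbb{F}_q$ (as an additive group) with $\Z_2^{k}$, so that $\mathbb{F}_q^m \cong \Z_2^{km}=\Z_2^d$. I would take
\[
  A \; = \; \{(x,\,x^3,\,x^5,\,\ldots,\,x^{2m-1}) : x\in\mathbb{F}_q\} \;\subset\; \mathbb{F}_q^m \; .
\]
These $q$ vectors are distinct (their first coordinates are), so $|A|=2^{d/m}$, and it remains to show that no $2m$ of them have zero sum. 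In fact I would prove the stronger statement that for every $j$ with $1\le j\le m$, no $2j$ distinct elements of $A$ sum to $0$, since this is exactly what the bound on $s_{2m}$ will need.

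Suppose $x_1,\ldots,x_{2j}$ are distinct with $\sum_i (x_i,x_i^3,\ldots,x_i^{2m-1})=0$. Writing $p_l=\sum_{i=1}^{2j} x_i^l$ for the power sums, vanishing of the first $j$ coordinates gives $p_1=p_3=\cdots=p_{2j-1}=0$; since $\mathbb{F}_q$ has characteristic $2$, the Frobenius identity $p_{2l}=p_l^2$ propagates this to $p_1=p_2=\cdots=p_{2j-1}=0$. The heart of the argument is the logarithmic derivative of $E(t)=\prod_{i=1}^{2j}(1+x_i t)\in\mathbb{F}_q[t]$: in $\mathbb{F}_q[[t]]$ one has
\[
  \frac{E'(t)}{E(t)} \; = \; \sum_{i=1}^{2j}\frac{x_i}{1+x_i t} \; = \; \sum_{l\ge 0} p_{l+1}\, t^{l} \; ,
\]
so the vanishing of $p_1,\ldots,p_{2j-1}$ forces $t^{2j-1}\mid E'(t)$. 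But in characteristic $2$ the derivative $E'$ retains only the odd-degree terms of $E$, each shifted down by one, hence is supported on even degrees $\le 2j-2$; being divisible by $t^{2j-1}$ it must vanish identically. Therefore $E$ has only even-degree terms, i.e. $E(t)=V(t)^2$ for some $V\in\mathbb{F}_q[t]$ (take Frobenius square roots of the coefficients). This is the contradiction I am after: $E$ is a product of the distinct linear factors $1+x_i t$ (over the $x_i\ne 0$, at most one $x_i$ being $0$), hence squarefree of degree $2j$ or $2j-1$, and a squarefree polynomial of positive degree cannot be a perfect square.

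The case $j=m$ shows $A$ is zero-free of rank $2m$, giving $\beta_{2m}(\Z_2^d)\ge|A|=2^{d/m}$. For the bound on $s_{2m}$, I would fix a nonzero $a\in A$ (possible since $q=2^{d/m}\ge 2$) and form the sequence consisting of every element of $A$ once together with $2m-2$ extra copies of $a$; it has length $2^{d/m}+2m-2$. A length-$2m$ subsequence uses $c$ copies of $a$ and $2m-c$ further distinct elements $S\subseteq A\setminus\{a\}$, and in $\Z_2^d$ its sum equals $\Sigma(S)$ when $c$ is even and $\Sigma(\{a\}\cup S)$ when $c$ is odd; in either case this is the sum of an even number of distinct elements of $A$ (namely $2m-c$ or $2m-c+1$, which lies between $2$ and $2m$), hence nonzero by the zero-freeness just established. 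Thus the sequence has no zero-sum subsequence of length $2m$, and $s_{2m}(\Z_2^d)\ge 2^{d/m}+2m-1$.

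The main obstacle is the middle step: in characteristic $2$ the even power sums are redundant, so Newton's identities fail to determine the even elementary symmetric functions and one cannot simply conclude that the $x_i$ coincide. The logarithmic-derivative/perfect-square argument is what circumvents this (it is in essence the BCH bound for the narrow-sense binary BCH code with zeros $\alpha,\alpha^3,\ldots,\alpha^{2m-1}$). The only point I would check carefully is the degenerate case where one $x_i=0$, which makes $\deg E$ odd and so already precludes a perfect square.
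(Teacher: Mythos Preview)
Your proof is correct and uses exactly the same construction as the paper: the ``moment curve'' $A=\{(x,x^3,\ldots,x^{2m-1}):x\in\mathbb{F}_{2^k}\}$, together with the Frobenius step $p_{2l}=p_l^2$ to pass from odd power sums to all power sums, and the same sequence (one copy of each element of $A$ plus $2m-2$ extra copies of a fixed $a$) for the lower bound on $s_{2m}$.

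The only substantive difference is in how you deduce a repetition among the $x_i$ from $p_1=\cdots=p_{2j-1}=0$. The paper does this in one line via the Vandermonde determinant: with $M_{ij}=x_i^{\,j-1}$ one has $(1,\ldots,1)M=(p_0,p_1,\ldots,p_{2j-1})=0$ (note $p_0=2j=0$ in characteristic~$2$), so $\det M=\prod_{i<j}(x_i-x_j)=0$. Your logarithmic-derivative argument (showing $E'=0$, hence $E$ is a perfect square, hence not squarefree) is the BCH-bound viewpoint; it is equally valid but longer, and the case split on whether some $x_i=0$ is unnecessary in the Vandermonde approach. Either route yields the stronger fact you need, namely zero-freeness of rank $2j$ for every $1\le j\le m$, and your treatment of the $s_{2m}$ bound is essentially identical to the paper's.
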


\begin{proof}
Let $d=m \cdot k$. 
Since ${\mathbb Z}_2^k$ is the additive group of 
$\mathbb{GF}\left(2^k\right)$, 
the elements of $\Z_2^{mk}$ 
can be represented by vectors $(x_1,x_2,\ldots,x_m)$ 
where $x_i \in \mathbb{GF}\left(2^k\right)$. 
Let $A$ be a set of $2^k$ vectors $(x,x^3,x^5,\ldots,x^{2m-1})$ 
where $x \in \mathbb{GF}\left(2^k\right)$. 
We are going to prove that $A$ is a zero-free set of rank $2n$ 
for each $n=1,2,\ldots,m$. 
Indeed, suppose that 
$x_1,x_2,\ldots,x_{2n} \in \mathbb{GF}\left(2^k\right)$ 
and $\sum_{i=1}^{2n} (x_i)^r = 0$ for every odd $r \leq 2n-1$. 
We need to show that there are $i,j$ such that $x_i=x_j$, $\;i \neq j$.
As 
$\left(\sum_{i=1}^{2n} (x_i)^r\right)^2 = 
 \sum_{i=1}^{2n} (x_i)^{2r}$, 
we get $\sum_{i=1}^{2n} (x_i)^r = 0$ for all $r \leq 2n-1$. 
Let $M=\left[M_{ij}\right]$ be a square matrix of order $2n$ 
over $\mathbb{GF}\left(2^k\right)$, 
where $M_{ij}=(x_i)^{j-1}$. 
Notice that 
$(1,1,\ldots,1) \cdot M = (0,0,\ldots,0)$, 
so $\det (M) = 0$ 
(where $0$ and $1$ are elements of $\mathbb{GF}\left(2^k\right)$). 
On the other hand, 
$\det (M) = \prod_{1 \leq i<j \leq 2n} (x_i-x_j)$ 
which means that there are $i,j$ such that $x_i=x_j$, $\;i \neq j$. 
We have proved by now that $\beta_{2m}(\Z_2^{mk}) \geq 2^k$. 

To prove the lower bound for $s_{2m}(\Z_2^{mk})$, 
select an element $a \in A$ and consider a sequence $S$ of length $2^k+2m-2$ 
where $a$ appears $2m-1$ times and each other element from $A$ appears once. 
We claim that $S$ does not contain a zero-sum subsequence of length $2m$. 
Indeed, suppose that such a subsequence $S'$ exists, 
and let $t$ be the number of appearances of $a$ in it. 
Let $2s$ be the largest even number that does not exceed $t$. 
Let $S''$ be obtained from $S$ by removing $2s$ copies of $a$. 
Then $S''$ is a zero-sum subsequence of length $2m-2s$ 
which does not contain multiple copies of the same element. 
It contradicts with the fact that $A$ is a zero-free set of rank $2(m-s)$. 
Therefore, $s_{2m}(\Z_2^{mk}) > 2^k+2m-2$.
\end{proof}


\section{Bounds for codegree Tur\'{a}n densities}
\label{sec:Turan2}

Let $G$ be a finite abelian group, and $r$ be a multiple of its exponent.
In \cref{sec:Zero-sum}, we defined $s_r(G)$ 
as the smallest integer $s$ such that every sequence of length $s$ 
over $G$ contains a zero-sum subsequence of length $r$.

\begin{theorem}\label{th:main}
If $G$ is a finite abelian group 
and $r$ is a multiple of $\exp(G)$ then
\[
  \tau\big( s_r(G),\; r \big) 
    \; \leq \; \frac{1}{|G|} \; .
\]
\end{theorem}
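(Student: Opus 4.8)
The plan is to produce, for every sufficiently large $n$, an $n$-vertex $r$-graph $H$ with $\alpha(H) < s_r(G)$ whose codegree satisfies $\Delta_{r-1}(H) \le \lceil n/|G| \rceil$. Dividing by $\binom{n-(r-1)}{r-(r-1)} = n-r+1$ and letting $n \to \infty$ then gives $\tau\big(s_r(G),r\big) = t_{r-1}\big(s_r(G),r\big) \le 1/|G|$, the limit being guaranteed to exist by Lo and Markstr\"{o}m.

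The construction I would use labels the vertices by group elements. I would fix a map $g \colon V(H) \to G$ that distributes the labels as evenly as possible, so that each fibre $g^{-1}(a)$ has size at most $\lceil n/|G| \rceil$, and then declare an $r$-subset $\{v_1,\ldots,v_r\}$ to be an edge exactly when $g(v_1) + \cdots + g(v_r) = 0$ in $G$.

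First I would check the independence bound. Given any $s_r(G)$ vertices, their labels form a sequence of length $s_r(G)$ over $G$, which by the definition of $s_r(G)$ contains a zero-sum subsequence of length $r$. As the vertices are distinct, this subsequence singles out $r$ distinct vertices whose labels sum to zero, i.e.\ an edge lying inside the chosen set; hence no $s_r(G)$ vertices can be independent, so $\alpha(H) < s_r(G)$. Next I would bound the codegree: for an $(r-1)$-subset $A$, any edge $A \cup \{w\}$ forces $g(w) = -\sum_{v \in A} g(v)$, a single fixed element of $G$, so the number of edges through $A$ is at most the size of that one fibre, namely $\le \lceil n/|G| \rceil$. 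Taking the maximum over $A$ gives $\Delta_{r-1}(H) \le \lceil n/|G| \rceil$.

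I do not expect a real obstacle here: the entire argument turns on the choice of the zero-sum edge rule, after which the independence bound is immediate from the defining property of $s_r(G)$ and the codegree bound is immediate from the even distribution of labels. The only mild point of care is the limit computation, namely that $\lceil n/|G|\rceil / (n-r+1) \to 1/|G|$, which is routine, and noting that $s_r(G)$ is finite precisely because $r$ is assumed to be a multiple of $\exp(G)$, so the statement is meaningful for every such $G$.
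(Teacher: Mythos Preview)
Your proposal is correct and is essentially the same construction and argument as in the paper: partition the vertex set into nearly equal classes indexed by $G$, take as edges the $r$-sets whose labels sum to zero, and read off the independence bound from the definition of $s_r(G)$ and the codegree bound from the fact that the last label is forced. The only difference is that you spell out the limit and finiteness remarks a bit more explicitly than the paper does.
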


\begin{proof}
Let $H_n$ be an $r$-graph with $n$ vertices 
that are divided into $|G|$ baskets of almost equal sizes, 
each basket is associated with an element of $G$, 
and $r$ vertices form an edge 
when the sum of their associated elements is zero. 
The degrees of all $(r-1)$-subsets of $V(H_n)$ 
are $|G|^{-1} n + O(1)$ as $n\rightarrow\infty$. 
By the definition of $s_r(G)$, 
any subset of vertices of size $s_r(G)$ 
contains an edge of $H_n$.
\end{proof}

\Cref{th:main} provides the strongest results when 
$r$ is small and $|G|$ is large. 
The best cases are $G=\Z_3^d$ with $r=3$, 
and $G=\Z_2^d$ with even values of $r$. 
When $G=\Z_2^d$ and $r=4$, 
\Cref{th:main,th:s_4} 
yield 
\begin{equation}\label{eq:r_is_4}
  \tau\left(\left\lfloor\sqrt{2^{d+1} - \frac{7}{4}} 
                   + \frac{7}{2}\right\rfloor, 
      4\right)
  \; \leq \; 2^{-d} \; ,
\end{equation}
as well as 
\[
  \tau(k,4) \; \leq \; 2k^{-2}+O\left( k^{-3} \right)
    \;\;\; {\rm as} \;\; k\rightarrow\infty \; .
\]
By combining \cref{eq:Lo,eq:r_is_4}, 
we obtain \cref{eq:small_r-k,eq:not_so_small_r-k}. 
\Cref{th:multiset,th:upper_general,th:main}, 
together with \cref{eq:Lo}, yield for $r \geq 4$
\[
  \tau(k,r) \; \leq \; O\left( k^{-\lfloor r/2 \rfloor} \right)
  \;\;\;\; {\rm as} \;\; k\rightarrow\infty \; .
\]

As $s_3(\Z_3^d) = s(\Z_3^d)$, 
\Cref{th:main} together with \cref{eq:Z_3_d} 
yield \cref{eq:r_3_small}. 
\Cref{th:main} together with \cref{eq:Z_3}
yield $\tau(k,3) \leq O\left(k^{-\ln(3) / \ln(\eta)}\right)$ 
where $\eta = (3/8) \sqrt[3]{207+33\sqrt{33}}$.
As $\ln(3) / \ln(\eta) > 1.084$, it results in  
\[
  \tau(k,3) \; = \; o\left(k^{-1.084}\right)
    \;\;\;\; {\rm as} \;\; k\rightarrow\infty \; .
\]

Recently, Lo and Zhao \cite{Lo:2018} 
proved that for each $r \geq 3$,
\begin{equation}\label{eq:lo_zhao}
  c_1 \frac{\ln k}{k^{r-1}}
    \; \leq \; 
  \tau(k,r)
    \; \leq \; 
  c_2 \frac{\ln k}{k^{r-1}} 
    \;\;\;\; {\rm as} \;\; k\rightarrow\infty \; .
\end{equation}
The upper estimate in \cref{eq:lo_zhao} 
is better than our asymptotic bounds.
Nevertheless, in the case when $k-r$ is small, 
our bounds \cref{eq:r_3_small,eq:small_r-k} are still better. 

Very recently, 
Gao's conjecture~\cref{eq:GaoConj} 
was proved in~\cite{Sidorenko:2018} 
for $k=2$. 
As a consequence, we may derive from \cref{th:main} that
\[
  \tau(r+d,r) \; \leq \; 2^{-d}
    \;\;\;\;\mbox{for}\;\; r \geq 2 \lceil d/2 \rceil \; .
\]

\section*{Acknowledgments}

The author would like to thank the referees for their suggestions and comments.

\bibliographystyle{siamplain}
\bibliography{hypercube}

\begin{thebibliography}{10}

\bibitem{Babai:1985}
{\sc L.~Babai and V.~T. S\'{o}s}, {\em Sidon sets in groups and induced
  subgraphs of \uppercase{C}ayley graphs}, Europ. J. Comb., 6 (1985),
  pp.~101--114, \url{https://doi.org/10.1016/S0195-6698(85)80001-9}.

\bibitem{Bitz:2017}
{\sc J.~Bitz, C.~Griffith, and X.~He}, {\em Exponential lower bounds on the
  generalized \uppercase{E}rdos–-\uppercase{G}inzburg–-\uppercase{Z}iv
  constant},  (2017), \url{https://arxiv.org/abs/1712.00861}.

\bibitem{Czygrinow:2001}
{\sc A.~Czygrinow and B.~Nagle}, {\em A note on codegree problems for
  hypergraphs}, Bull. Inst. Combin. Appl., 32 (2001), pp.~63--–69.

\bibitem{Edel:2007}
{\sc Y.~Edel, C.~Elsholtz, A.~Geroldinger, S.~Kubertin, and L.~Rackham}, {\em
  Zero-sum problems in finite abelian groups and affine caps}, Quarterly. J.
  Math., 58 (2007), pp.~159--–186,
  \url{https://doi.org/10.1093/qmath/ham003}.

\bibitem{Ellenberg:2017}
{\sc J.~S. Ellenberg and D.~Gijswijt}, {\em On large subsets of
  $\mathbb{F}_q^n$ with no three-term arithmetic progression}, Ann. of Math.,
  (2), 185 (2017), pp.~339–--343,
  \url{https://doi.org/10.4007/annals.2017.185.1.8}.

\bibitem{Erdos:1961}
{\sc P.~Erd\H{o}s, A.~Ginzburg, and A.~Ziv}, {\em Theorem in the additive
  number theory}, Bull. Research Council Israel, 10F (1961), pp.~41--43.

\bibitem{Erdos_Ko_Rado}
{\sc P.~Erd\H{o}s, C.~Ko, and R.~Rado}, {\em Intersection theorems for systems
  of finite sets}, Quarterly. J. Math., 12 (1961), pp.~313--320,
  \url{https://doi.org/10.1093/qmath/12.1.313}.

\bibitem{Falgas-Ravry:2013}
{\sc V.~Falgas-Ravry}, {\em On the codegree density of complete $3$-graphs and
  related problems}, The Electronic Journal of Combinatorics, 20 (2013), 28,
  \url{https://doi.org/10.1137/130926997}.

\bibitem{Gao:2006}
{\sc W.~Gao and A.~Geroldinger}, {\em Zero-sum problems in finite abelian
  groups: a survey}, Expo. Math., 24 (2006), pp.~337--369,
  \url{https://doi.org/10.1016/j.exmath.2006.07.002}.

\bibitem{Gao:2014}
{\sc W.~Gao, D.~Han, J.~Peng, and F.~Sun}, {\em On zero-sum subsequences of
  length {$k\exp(G)$}}, J. Comb. Theory Ser. A, 125 (2014), pp.~240--253,
  \url{https://doi.org/10.1016/j.jcta.2014.03.006}.

\bibitem{Gao:2006b}
{\sc W.~Gao and R.~Thangadurai}, {\em On zero-sum sequences of prescribed
  length}, Aequationes Math., 72 (2006), pp.~201--212,
  \url{https://doi.org/10.1007/s00010-006-2841-y}.

\bibitem{Gao:2003}
{\sc W.~D. Gao}, {\em On zero-sum subsequences of restricted size,
  \uppercase{II}}, Discrete Math., 271 (2003), pp.~51--59,
  \url{https://doi.org/10.1016/S0012-365X(03)00038-4}.

\bibitem{Han:2018}
{\sc D.~Han and H.~Zhang}, {\em On zero-sum subsequences of prescribed length},
  Int. J. Number Theory, 14 (2018), pp.~167--191,
  \url{https://doi.org/10.1142/S1793042118500112}.

\bibitem{Harborth:1973}
{\sc H.~Harborth}, {\em Ein extremalproblem f\"{u}r gitterpunkte}, J. Reine
  Angew. Math., 262 (1973), pp.~356--360.

\bibitem{He:2016}
{\sc X.~He}, {\em Zero-sum subsequences of length $kq$ over finite abelian
  $p$-groups}, Discrete Math., 339 (2016), pp.~399--407,
  \url{https://doi.org/10.1016/j.disc.2015.09.005}.

\bibitem{Keevash:2011}
{\sc P.~Keevash}, {\em Hypergraph \uppercase{T}ur\'{a}n problems}, in Surveys
  in Combinatorics 2011, R.~Chapman, ed., London Mathematical Society Lecture
  Note Series, Cambridge University Press, 2011, pp.~83--140,
  \url{https://doi.org/10.1017/CBO9781139004114.004}.

\bibitem{Kemnitz:1983}
{\sc A.~Kemnitz}, {\em On a lattice point problem}, Ars Combinatoria, 16b
  (1983), pp.~151--160.

\bibitem{Kubertin:2005}
{\sc S.~Kubertin}, {\em Zero-sums of length $kq$ in $\mathbb{Z}_q^d$}, Acta
  Arithmetica, 116 (2005), pp.~145--152,
  \url{https://doi.org/10.4064/aa116-2-3}.

\bibitem{Lindstrom:1969}
{\sc B.~Lindstr\"{o}m}, {\em Determination of two vectors from the sum}, J.
  Comb. Theory, 6 (1969), pp.~402--407,
  \url{https://doi.org/10.1016/S0021-9800(69)80038-4}.

\bibitem{Lo:2014}
{\sc A.~Lo and K.~Markstr\"{o}m}, {\em $l$-degree \uppercase{T}ur\'{a}n
  density}, SIAM J. Discrete Math., 28 (2014), pp.~1214--1225,
  \url{https://doi.org/10.1137/120895974}.

\bibitem{Lo:2018}
{\sc A.~Lo and Y.~Zhao}, {\em Codegree \uppercase{T}ur\'{a}n density of
  complete $r$-uniform hypergraphs},  (2018),
  \url{https://arxiv.org/abs/1801.01393}.

\bibitem{Mantel:1907}
{\sc W.~Mantel}, {\em Vraagstuk \uppercase{XXVIII}}, Wiskundige Opgaven met de
  Oplossingen, 10 (1907), pp.~60–--61.

\bibitem{Mubayi:2007}
{\sc D.~Mubayi and Y.~Zhao}, {\em Co-degree density of hypergraphs}, J. Combin.
  Theory Ser. A, 114 (2007), pp.~1118–--1132,
  \url{https://doi.org/10.1016/j.jcta.2006.11.006}.

\bibitem{Potechin:2008}
{\sc A.~Potechin}, {\em Maximal caps in $\uppercase{AG}(6,3)$}, Des. Codes
  Cryptogr., 46 (2008), pp.~243--259,
  \url{https://doi.org/10.1007/s10623-007-9132-z}.

\bibitem{Reiher:2007}
{\sc C.~Reiher}, {\em On \uppercase{K}emnitz’ conjecture concerning
  lattice-points in the plane}, Ramanujan J., 13 (2007), pp.~333–--337,
  \url{https://doi.org/10.1007/s11139-006-0256-y}.

\bibitem{Sidorenko:1995}
{\sc A.~Sidorenko}, {\em What we know and what we do not know about
  \uppercase{T}ur\'{a}n numbers}, Graphs and Combinatorics, 11 (1995),
  pp.~179--199, \url{https://doi.org/10.1007/BF01929486}.

\bibitem{Sidorenko:2018}
{\sc A.~Sidorenko}, {\em On generalized
  \uppercase{E}rd{\H{o}}s--\uppercase{G}inzburg--\uppercase{Z}iv constants for
  $\mathbb{Z}_2^d$},  (2018), \url{https://arxiv.org/abs/1808.06555}.

\bibitem{Simonovits:1997}
{\sc M.~Simonovits}, {\em How to solve a \uppercase{T}ur\'{a}n type extremal
  graph problem? (\uppercase{L}inear decomposition)}, in Contemporary Trends in
  Discrete Mathematics (\v{S}ti\v{r}\'{i}n Castle, 1997), vol.~49 of DIMACS
  Ser. Discrete Math. Theoret. Comput. Sci., 1999, pp.~283--305.

\bibitem{Turan:1941}
{\sc P.~Tur\'{a}n}, {\em Egy gr\'{a}felm\'{e}leti
  sz\'{e}ls\"{o}\'{e}rt\'{e}kfeladatrol}, Mat. Fiz. Lapok, 48 (1941),
  pp.~436–--453.

\end{thebibliography}

\end{document}